\theoremstyle{definition}
\theoremstyle{remark}
\numberwithin{equation}{section}
\begin{document}
\title{On tensor products of semistable lattices}
\author{Ehud de Shalit and Ori Parzanchevski}
\address{Institute of mathematics, Hebrew University, Jerusalem 91904 ISRAEL}
\email{deshalit@math.huji.ac.il, parzan@math.huji.ac.il}
\date{November 15, 2006}
\maketitle

\begin{abstract}
We give an elementary proof of low rank cases of the conjecture that the
tensor product of two semistable Euclidean lattices is again
semistable.\bigskip 
\end{abstract}

The degree of a lattice $L$ in Euclidean space $\Bbb{R}^{n}$ is defined by
the expression 
\begin{equation}
\deg (L)=-\log vol(\Bbb{R}^{n}/L),
\end{equation}
where $vol$ stands for volume. Its rank\emph{\ }is $rk(L)=n,$ and the \emph{%
slope} $\mu (L)$ is the ratio 
\begin{equation}
\mu (L)=\frac{\deg (L)}{rk(L)}.
\end{equation}
The lattice $L$ is called \emph{semistable} if for any (lower rank)
sublattice $S,$ $\mu (S)\le \mu (L).$ This notion is invariant under
scaling, so if we normalize $L$ to be unimodular ($\mu (L)=0$), it means
that the volume of a fundamental parallelopiped of any sublattice $S$ is at
least 1. Thus the shortest nonzero vector in $L$ has norm at least 1, every
two independent vectors span a parallelogram of area at least 1, and so on.

The following problem seems to be open. The first author learned about it
more than ten years ago from J.-B. Bost.\medskip

\textbf{Question. }Is $L\otimes M$ semistable whenever $L$ and $M$
are?\medskip

The notion of semistability and the question can be easily generalized to
``metrized vector bundles over $Spec(\mathcal{O}_{K})"$ for any number field 
$K.$ They come up naturally in Arakelov geometry. However, it is already
interesting, and probably not less difficult, over $Spec(\Bbb{Z)},$ where it
amounts to the statement made above. It is modeled on similar results for
vector bundles over curves, and for filtered vector spaces (see [Fa],[To]).
The corresponding result for filtered vector spaces is surprisingly
difficult, although it is only a statement in linear algebra, and this leads
one to suspect that the question for lattices is not easy either.\medskip

The purpose of this note is to give an affirmative answer in some special
cases.

\begin{theorem}
Let $L$ and $M$ be two semistable lattices. If $S$ is a sublattice of $%
L\otimes M$ and $rk(S)\le 3$ then 
\begin{equation}
\mu (S)\le \mu (L\otimes M).
\end{equation}
\end{theorem}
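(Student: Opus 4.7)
The plan is to normalize $L$ and $M$ to be unimodular (rescaling preserves both the semistability hypothesis and the desired inequality); then $\mu(L\otimes M)=0$, and we must prove $\mathrm{vol}(\mathbb{R}^{k}/S)\ge 1$ for every sublattice $S\subset L\otimes M$ of rank $k\in\{1,2,3\}$.

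\emph{Rank $1$.} Given $0\ne v\in L\otimes M$, view $v$ as a linear map $\phi\colon M^{\vee}\to L$ of tensor rank $r$, and set $L_{v}:=\phi(M^{\vee})\subset L$, $K:=\ker\phi\cap M^{\vee}\subset M^{\vee}$, $M_{v}:=\phi^{t}(L^{\vee})\subset M$, $K':=\ker\phi^{t}\cap L^{\vee}\subset L^{\vee}$. Since semistability is preserved under duality, $L^{\vee}$ and $M^{\vee}$ are also unimodular and semistable, so all four sublattices have covolume $\ge 1$. The map $\phi$ induces a linear isomorphism $M^{\vee}/K\cong L_{v}$ with singular values $\sigma_{1},\ldots,\sigma_{r}$, and multiplicativity of covolumes along the sequence $0\to K\to M^{\vee}\to M^{\vee}/K\to 0$ yields
\begin{equation}
\sigma_{1}\cdots\sigma_{r} \;=\; \mathrm{vol}(\mathbb{R}^{r}/L_{v})\cdot\mathrm{vol}(\mathbb{R}^{m-r}/K) \;\ge\; 1.
\end{equation}
Then $\|v\|^{2}=\sigma_{1}^{2}+\cdots+\sigma_{r}^{2}\ge r(\sigma_{1}\cdots\sigma_{r})^{2/r}\ge r\ge 1$ by AM--GM.

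\emph{Ranks $2$ and $3$.} Let $v_{1},\ldots,v_{k}$ be a $\mathbb{Z}$-basis of $S$, so that $\mathrm{vol}(\mathbb{R}^{k}/S)^{2}=\det(\langle v_{i},v_{j}\rangle)=\|v_{1}\wedge\cdots\wedge v_{k}\|^{2}$ inside $\Lambda^{k}(L\otimes M)$. The plan is to exploit the orthogonal Cauchy decomposition
\begin{equation}
\Lambda^{k}(L\otimes M)_{\mathbb{R}} \;=\; \bigoplus_{\lambda\vdash k} S^{\lambda'}L_{\mathbb{R}}\otimes S^{\lambda}M_{\mathbb{R}}
\end{equation}
(with $S^{\lambda}$ the Schur functor of shape $\lambda$ and $\lambda'$ the conjugate partition), writing $v_{1}\wedge\cdots\wedge v_{k}=\sum_{\lambda}w_{\lambda}$ with orthogonal summands and estimating each $\|w_{\lambda}\|^{2}$ by adapting the rank-$1$ argument to the appropriate Schur functor lattice. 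A parallel ``column--row'' reduction places $S$ inside $L_{S}\otimes M_{S}$, where $L_{S}:=\sum_{j}\phi_{v_{j}}(M^{\vee})$ and $M_{S}:=\sum_{j}\phi_{v_{j}}^{t}(L^{\vee})$; the ranks $r_{L},r_{M}$ satisfy $r_{L}r_{M}\ge k$ and are bounded in terms of $k$, forcing a short case analysis. The easy cases $(r_{L},r_{M})$ with $r_{L}r_{M}=k$ give $\mathrm{vol}(\mathbb{R}^{k}/S)\ge\mathrm{vol}(L_{S}\otimes M_{S})=\mathrm{vol}(L_{S})^{r_{M}}\mathrm{vol}(M_{S})^{r_{L}}\ge 1$ straight from semistability.

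\emph{Main obstacle.} The remaining configurations, where the ambient tensor $L_{S}\otimes M_{S}$ has rank strictly larger than $k$, are the heart of the argument. For $k=3$ the Cauchy decomposition features a mixed summand $S^{(2,1)}L\otimes S^{(2,1)}M$ with no pure-tensor structure, and one must either establish a partial semistability statement for this Schur piece (not available in general) or exploit combinatorics specific to rank $\le 3$ to circumvent it. It is presumably the latter that makes the bound $k\le 3$ essential and that suggests why the general conjecture remains open.
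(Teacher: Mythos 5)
Your rank~$1$ argument is correct and is essentially the paper's Proposition in different clothing: writing $\alpha=\sum_{k=1}^{l}u_{k}\otimes u_{k}^{\prime}$ and taking Gramians $A,A^{\prime}$, the paper shows $|\alpha|^{2}=Tr(AA^{\prime})\ge l\det(AA^{\prime})^{1/l}\ge l$, which is exactly your AM--GM on the squared singular values of $\phi_{v}$; your identity $\sigma_{1}\cdots\sigma_{r}=\mathrm{vol}(L_{v})\,\mathrm{vol}(K)\ge 1$ is the same determinant bound obtained via duality rather than directly. The problem is everything after that. For ranks $2$ and $3$ --- which are the actual content of the theorem --- you offer a program, not a proof: you dispose only of the configurations with $r_{L}r_{M}=k$ (where $S$ has finite index in $L_{S}\otimes M_{S}$) and then explicitly concede that the remaining configurations, in particular the mixed Schur summand $S^{(2,1)}L\otimes S^{(2,1)}M$ for $k=3$, require an idea you do not supply. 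Already for $k=2$ the case $r_{L}=r_{M}=2$ (e.g.\ $S$ spanned by two decomposable tensors with independent factors on both sides) is untreated, and there the Cauchy decomposition by itself gives nothing: the orthogonal projections onto $S^{2}L\otimes\Lambda^{2}M$ and $\Lambda^{2}L\otimes S^{2}M$ have norms involving \emph{negative} cross terms such as $-|v_{1}\wedge v_{2}|^{2}|w_{1}\wedge w_{2}|^{2}$, and bounding the sum from below is precisely the difficulty.

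What the paper does to close this gap uses three ingredients absent from your proposal. First, reduction theory: one chooses a reduced basis of $S$ (shortest vector $\alpha$, then $\beta$ with the angle to $\alpha$ in $[60^{\circ},120^{\circ}]$, then $\gamma$ reduced modulo $\mathbb{Z}\alpha+\mathbb{Z}\beta$), which gives $|\alpha\wedge\beta|\ge\frac{\sqrt{3}}{2}|\alpha||\beta|$ and $|\alpha\wedge\beta\wedge\gamma|^{2}\ge|\gamma|^{2}/2$. Second, the \emph{full strength} of the rank-$1$ bound $|\alpha|\ge\sqrt{l(\alpha)}$ (not merely $\ge 1$): if any reduced basis vector is indecomposable its norm is at least $\sqrt{2}$, and the reduction inequalities then finish that case outright, so one may assume all basis vectors are pure tensors $v_{i}\otimes w_{i}$. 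Third, for pure tensors one has explicit closed identities --- a $3$-term identity for $|(v_{1}\otimes w_{1})\wedge(v_{2}\otimes w_{2})|^{2}$ and an $18$-term identity for $|\alpha\wedge\beta\wedge\gamma|^{2}$ --- expressing the Gram determinant entirely in terms of $|v_{i}|$, $|v_{i}\wedge v_{j}|$, $|v_{1}\wedge v_{2}\wedge v_{3}|$ and their $w$-counterparts, each of which is bounded below by $1$ (or vanishes) by semistability; a case analysis on which of the $v_{i}$, $w_{i}$ are dependent, plus an elementary trigonometric estimate in the planar case, then yields $|\alpha\wedge\beta\wedge\gamma|^{2}\ge 1$. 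Your instinct that the bound $k\le 3$ is forced by the failure of this kind of bookkeeping in higher rank is sound, but as written the proposal does not prove the theorem for $k=2$ or $k=3$.
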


Bost had obtained other special cases using Geometric Invariant Theory
(unpublished). We should also stress that our elementary methods do not
generalize to ``metrized vector bundles over $Spec(\mathcal{O}_{K})"$ when $%
K $ is not $\Bbb{Q}.$

\section{The proof}

\subsection{Preliminaries on Euclidean spaces}

If $V$ and $W$ are Euclidean spaces (finite dimensional inner product spaces
over $\Bbb{R})$ then so is $V\otimes W$ with the inner product 
\begin{equation}
(v\otimes w,v^{\prime }\otimes w^{\prime })_{V\otimes W}=(v,v^{\prime
})_{V}(w,w^{\prime })_{W}.
\end{equation}
If $\left\{ e_{i}\right\} $ is an orthonormal basis of $V$ and $\left\{
e_{j}^{\prime }\right\} $ an orthonormal basis of $W$ then $\left\{
e_{i}\otimes e_{j}^{\prime }\right\} $ is an orthonormal basis of $V\otimes
W.$ In this way $V^{\otimes k}$ is endowed with a Euclidean structure.

The $k$-th exterior power $\bigwedge^{k}V$ is the quotient of $V^{\otimes k}$
by the subspace $\mathcal{N}$ spanned by tensors $v_{1}\otimes \cdot \cdot
\cdot \otimes v_{k}$ in which, for some $i\neq j$, $v_{i}=v_{j}.$ The
orthogonal complement $\mathcal{N}^{\perp }$ is the space of \emph{%
alternating} $k$\emph{-tensors}, spanned by 
\begin{equation}
\frac{1}{k!}\sum_{\sigma \in S_{k}}sgn(\sigma )v_{\sigma (1)}\otimes \cdot
\cdot \cdot \otimes v_{\sigma (k)},
\end{equation}
and this tensor projects modulo $\mathcal{N}$ to $v_{1}\wedge \cdot \cdot
\cdot \wedge v_{k},$ the image modulo $\mathcal{N}$ of $v_{1}\otimes \cdot
\cdot \cdot \otimes v_{k}.$ We may therefore identify $\bigwedge^{k}V$ with $%
\mathcal{N}^{\perp }$. As such it inherits from $V^{\otimes k}$ a Euclidean
structure. If $\left\{ e_{i}\right\} $ is an orthonormal basis of $V$ then $%
e_{I}=e_{i_{1}}\wedge \cdot \cdot \cdot \wedge e_{i_{k}},$ for $%
I=(i_{1}<\dots <i_{k}),$ is a basis of $\bigwedge^{k}V,$ which is orthogonal
but not normalized: the norm of $e_{I}$ is $1/\sqrt{k!}.$ To correct it, we
modify the inner product that $\bigwedge^{k}V$ inherits from $V^{\otimes k}$
by a factor of $k!$ and set 
\begin{equation}
(v_{1}\wedge \cdot \cdot \cdot \wedge v_{k},v_{1}^{\prime }\wedge \cdot
\cdot \cdot \wedge v_{k}^{\prime })=\sum_{\sigma \in S_{k}}sgn(\sigma
)\prod_{i=1}^{k}(v_{i},v_{\sigma (i)}^{\prime }).
\end{equation}
In this inner product the $e_{I}$ form an orthonormal basis. Moreover, the 
\emph{volume} of the $k$-dimensional parallelopiped spanned by $v_{1},\dots
,v_{k}$ in $V$ is nothing but the \emph{norm} $|v_{1}\wedge \cdot \cdot
\cdot \wedge v_{k}|.$ To see it, choose an orthonormal basis $e_{i}$ of $V$
the first $k$ vectors of which span $Span\left\{ v_{1},\dots ,v_{k}\right\} $
(assuming the $v_{i}$ are linearly independent) and observe that 
\begin{equation}
v_{1}\wedge \cdot \cdot \cdot \wedge v_{k}=\det (a_{ij})e_{1}\wedge \cdot
\cdot \cdot \wedge e_{k}
\end{equation}
where $v_{i}=\sum a_{ij}e_{j}.$

\subsection{Length and norm in $L\otimes M,$ and the rank 1 case of the
theorem}

To begin the proof, note that 
\begin{equation}
\mu (L\otimes M)=\mu (L)+\mu (M).
\end{equation}
If $\alpha $ is a vector in $L\otimes M$ we denote by $l(\alpha )$ the \emph{%
length} of $\alpha ,$ which is the minimal number $l$ such that 
\begin{equation}
\alpha =\sum_{k=1}^{l}u_{k}\otimes u_{k}^{\prime }.
\end{equation}
(To avoid abuse of language, $|\alpha |$ will always be called the norm of $%
\alpha $, and not its length.) In such a case, the $u_{k}$ and the $%
u_{k}^{\prime }$ are linearly independent vectors in $L$ and $M$
respectively. Thus the length of a vector is at most $\min (rk(L),rk(M)).$

Let us write, for any lattice $M$%
\begin{equation}
\mu _{\max }(M)=\max \left\{ \mu (L);\,L\subseteq M\right\} .
\end{equation}
The maximum is over all sublattices of all ranks (it is easily seen that the
maximum is attained). A unimodular lattice $M$ is semistable if and only if $%
\mu _{\max }(M)=0.$

\begin{proposition}
Let $L$ and $M$ be any two lattices, normalized so that $\mu _{\max }(L)=\mu
_{\max }(M)=0$ and $\alpha \in L\otimes M.$ Then 
\begin{equation}
|\alpha |\ge \sqrt{l(\alpha )}.
\end{equation}
\end{proposition}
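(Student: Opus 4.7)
The plan is to attach to $\alpha$ a non-zero ``determinant'' tensor $\Delta(\alpha)\in\bigwedge^{l}L\otimes\bigwedge^{l}M$ (where $l=l(\alpha)$), to bound $|\Delta(\alpha)|\ge 1$ using semistability, and then to conclude via the singular value decomposition and the AM--GM inequality. Fix a length-$l$ expression $\alpha=\sum_{k=1}^{l}u_{k}\otimes u_{k}^{\prime}$; the linearly independent $u_{k}$ span an $l$-dimensional subspace $V\subseteq L_{\mathbb{R}}$, and the $u_{k}^{\prime}$ span $V'\subseteq M_{\mathbb{R}}$. Set
$$\Delta(\alpha)=(u_{1}\wedge\cdots\wedge u_{l})\otimes(u_{1}^{\prime}\wedge\cdots\wedge u_{l}^{\prime})\in\textstyle\bigwedge^{l}V\otimes\bigwedge^{l}V'.$$
Passing between two length-$l$ decompositions amounts to a change of basis given by invertible matrices $A,B$ with $A^{T}B=I$, so $\det A\cdot\det B=1$ and $\Delta(\alpha)$ is independent of the chosen decomposition. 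Writing $\alpha=\sum c_{ij}\ell_{i}\otimes m_{j}$ in integer bases and expanding the wedges yields $\Delta(\alpha)=\sum_{|I|=|J|=l}\det(C_{IJ})\,\ell_{I}\otimes m_{J}$, so in fact $\Delta(\alpha)\in\bigwedge^{l}L\otimes\bigwedge^{l}M$.

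Next I would bring semistability to bear. The saturated sublattices $S=L\cap V$ and $S'=M\cap V'$ have rank $l$, and by the normalization $\mu_{\max}(L)=\mu_{\max}(M)=0$ their covolumes satisfy $\mathrm{vol}(V/S)\ge 1$ and $\mathrm{vol}(V'/S')\ge 1$. These covolumes equal the norms $|\epsilon|$ and $|\epsilon'|$ of the primitive vectors $\epsilon\in\bigwedge^{l}S\subset\bigwedge^{l}L$ and $\epsilon'\in\bigwedge^{l}S'\subset\bigwedge^{l}M$, which span the lines $\bigwedge^{l}V$ and $\bigwedge^{l}V'$. Since $\epsilon,\epsilon'$ extend to $\mathbb{Z}$-bases of $\bigwedge^{l}L,\bigwedge^{l}M$, the pure tensor $\epsilon\otimes\epsilon'$ is primitive in $\bigwedge^{l}L\otimes\bigwedge^{l}M$, and the one-dimensional subspace $\bigwedge^{l}V\otimes\bigwedge^{l}V'$ meets the lattice in $\mathbb{Z}(\epsilon\otimes\epsilon')$. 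Thus $\Delta(\alpha)$, which is non-zero, integral and lies on this line, equals $N\cdot\epsilon\otimes\epsilon'$ for some non-zero $N\in\mathbb{Z}$, and therefore
$$|\Delta(\alpha)|\ge|\epsilon|\cdot|\epsilon'|=\mathrm{vol}(V/S)\cdot\mathrm{vol}(V'/S')\ge 1.$$

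To close the argument I would use the singular value decomposition of $\alpha$ in orthonormal bases of $V$ and $V'$: writing $\alpha=\sum_{k=1}^{l}\sigma_{k}f_{k}\otimes f_{k}^{\prime}$ with $\{f_{k}\}$, $\{f_{k}^{\prime}\}$ orthonormal and $\sigma_{k}>0$, a direct calculation gives $|\alpha|^{2}=\sum\sigma_{k}^{2}$ and $|\Delta(\alpha)|=\prod\sigma_{k}$. The AM--GM inequality then yields
$$|\alpha|^{2}=\sum_{k=1}^{l}\sigma_{k}^{2}\ge l\Bigl(\prod_{k=1}^{l}\sigma_{k}\Bigr)^{\!2/l}=l\,|\Delta(\alpha)|^{2/l}\ge l.$$
The main obstacle is the integrality/primitivity step $\Delta(\alpha)\in\mathbb{Z}\cdot(\epsilon\otimes\epsilon')$, where the interplay between the tensor, exterior, and lattice structures must be unwound carefully. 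Once this is in place, semistability enters only through the mild covolume bounds $\mathrm{vol}(S),\mathrm{vol}(S')\ge 1$, and the closing SVD/AM--GM computation is purely mechanical.
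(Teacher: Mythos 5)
Your proof is correct, and at its core it rests on the same inequality as the paper's: the AM--GM inequality applied to the squared singular values of $\alpha$, with their product bounded below by $1$ via the normalization $\mu_{\max}=0$. But the packaging is genuinely different, and the comparison is instructive. The paper writes $|\alpha|^{2}=\mathrm{Tr}(AA^{\prime})$ for the Gramians $A=((u_{k},u_{m}))$ and $A^{\prime}=((u_{k}^{\prime},u_{m}^{\prime}))$, gets $\det A\ge 1$ and $\det A^{\prime}\ge 1$ in one line from the fact that the $u_{k}$ (resp.\ $u_{k}^{\prime}$) are independent lattice vectors spanning a sublattice of covolume at least $1$, and then needs the small lemma (attributed to Goldberger) that $AA^{\prime}$, though not symmetric, has real positive spectrum, being conjugate to $A^{1/2}A^{\prime}A^{1/2}$. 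Your singular value decomposition replaces that lemma entirely: the nonzero eigenvalues of $AA^{\prime}$ are exactly your $\sigma_{k}^{2}$, so $\mathrm{Tr}(AA^{\prime})=\sum\sigma_{k}^{2}=|\alpha|^{2}$ and $\det(AA^{\prime})=\prod\sigma_{k}^{2}=|\Delta(\alpha)|^{2}$, and the two computations manipulate literally the same numbers. Where you diverge more substantially is in proving $|\Delta(\alpha)|\ge 1$: rather than invoking directly that the $u_{k}$ span a sublattice of $L$, you pass through the integrality of the $l\times l$ minors $\det(C_{IJ})$, the saturations $S=L\cap V$, $S^{\prime}=M\cap V^{\prime}$, and the primitivity of $\epsilon\otimes\epsilon^{\prime}$. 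This costs extra machinery (well-definedness of $\Delta$, the compound-matrix identity), but it buys robustness: your argument never needs to know that a minimal-length decomposition can be realized with $u_{k}\in L$ and $u_{k}^{\prime}\in M$, a point the paper tacitly assumes (it is true, e.g.\ by Smith normal form, but is not spelled out). Both routes are sound, and all the steps you flag as delicate --- well-definedness of $\Delta$, integrality, primitivity --- do go through as you describe.
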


\begin{proof}
Writing $\alpha =\sum_{k=1}^{l}u_{k}\otimes u_{k}^{\prime }$, a direct
computation gives 
\begin{equation}
|\alpha |^{2}=\sum_{k,m}(u_{k},u_{m})(u_{k}^{\prime },u_{m}^{\prime }).
\end{equation}
\newline
Consider the \emph{Gramians }$A=\left( (u_{k},u_{m})\right) $ and $A^{\prime
}=\left( (u_{k}^{\prime },u_{m}^{\prime })\right) .$ These are symmetric
positive definite $l$ by $l$ matrices with $\det (A)\ge 1$ and $\det
(A^{\prime })\ge 1$ because we normalized $L$ and $M$ so that the volume of
the parallelopiped formed by the $u_{k}$ or the $u_{k}^{\prime }$ is at
least 1.

We are indebted to Assaf Goldberger for pointing out the following fact.

\begin{lemma}
Let $A$ and $A^{\prime }$ be two symmetric positive definite $l$ by $l$
matrices. Then the spectrum of $AA^{\prime }$ is real and positive.
\end{lemma}

The matrix $AA^{\prime },$ although not necessarily symmetric, is conjugate
to 
\begin{equation}
A^{1/2}A^{\prime }A^{1/2}=(A^{1/2})^{t}A^{\prime }A^{1/2},
\end{equation}
which is symmetric positive definite since $A^{\prime }$ is.

We now continue the proof of the proposition. By our assumption, the product
of the eigenvalues of $AA^{\prime }$ is at least 1, and the lemma implies
that they are real and positive. By the inequality between the geometric and
arithmetic means, $Tr(AA^{\prime })\ge l.$ However, 
\begin{equation}
Tr(AA^{\prime })=|\alpha |^{2},
\end{equation}
so $|\alpha |\ge \sqrt{l}$ as needed.
\end{proof}

The proposition proves the case $\mathbf{rk(S)=1}$ of the theorem, but to
prove the cases $rk(S)=2$ or $3$ we shall need its full strength.

\subsection{The rank 2 case of the theorem}

Let now $S$ be a sublattice of \textbf{rank }$2$ in $L\otimes M.$ Let $%
\alpha $ be a nonzero vector of shortest norm in $S,$ and $\beta $ another
vector in $S$ such that $S=\Bbb{Z}\alpha +\Bbb{Z}\beta $. Subtracting a
multiple of $\alpha $ from $\beta $ we may assume that the angle between $%
\alpha $ and $\beta $ is between $60^{\circ }$ and $120^{\circ }.$ It
follows that 
\begin{equation}
|\alpha \wedge \beta |\ge \frac{\sqrt{3}}{2}|\alpha ||\beta |.
\end{equation}
If either of $\alpha $ or $\beta $ is indecomposable (of length $\ge 2$) we
are done, since its norm is then at least $\sqrt{2}$ and the other's norm is
at least 1, but $\sqrt{2}\sqrt{3}>2.$ We may therefore assume that 
\begin{equation}
\alpha =v_{1}\otimes w_{1},\,\,\beta =v_{2}\otimes w_{2}
\end{equation}
are both decomposable tensors. We now use the identity 
\begin{eqnarray}
|(v_{1}\otimes w_{1})\wedge (v_{2}\otimes w_{2})|^{2}
&=&|v_{1}|^{2}|v_{2}|^{2}|w_{1}\wedge w_{2}|^{2}+|v_{1}\wedge
v_{2}|^{2}|w_{1}|^{2}|w_{2}|^{2}  \notag \\
&&-|v_{1}\wedge v_{2}|^{2}|w_{1}\wedge w_{2}|^{2},
\end{eqnarray}
which follows at once from the fact that 
\begin{equation}
|\alpha \wedge \beta |^{2}=|\alpha |^{2}|\beta |^{2}-(\alpha ,\beta )^{2}.
\end{equation}
If $v_{1}$ and $v_{2}$ are proportional, then $w_{1}$ and $w_{2}$ are
independent, and $|v_{1}|^{2}|v_{2}|^{2}|w_{1}\wedge w_{2}|^{2}$ $\ge 1,$
while the second and the third terms vanish. Otherwise, the first term is
larger than the third, and the second is $\ge 1$ by our assumption. This
concludes the rank 2 case.

\subsection{The rank 3 case of the theorem}

The \textbf{rank 3} case is handled similarly, but the details are more
complicated. Let $rk(S)=3.$ Let $\alpha $ be a shortest nonzero vector in $S$
and $\beta $ a second shortest vector independent from $\alpha ,$ and assume
as before that the angle between $\alpha $ and $\beta $ is between $%
60^{\circ }$ and $120^{\circ }.$ Complete to a basis of $S$ by a vector $%
\gamma ,$ $|\gamma |\ge |\beta |\ge |\alpha |\ge 1.$ Using orthonormal
coordinates $x,y,z$ in the real subspace spanned by $S,$ such that the $x$%
-axis is in the $\alpha $-direction and the $(x,y)$-plane is the plane
spanned by $\alpha $ and $\beta ,$ we may assume, subtracting from $\gamma $
a suitable integral linear combination of $\alpha $ and $\beta $ that 
\begin{equation}
\gamma =(x,y,z)
\end{equation}
with $|x|\le |\alpha |/2$ and $|y|\le |\beta |/2.$ This is because the
rectangle 
\begin{equation}
\left\{ |x|\le |\alpha |/2,\,\,|y|\le |\beta |\sin \theta /2\right\} ,
\end{equation}
where $\theta $ is the angle between $\alpha $ and $\beta ,$ is a
fundamental domain for $\Bbb{Z}\alpha +\Bbb{Z}\beta .$ We now have 
\begin{equation}
z^{2}\ge |\gamma |^{2}-(|\alpha |^{2}+|\beta |^{2})/4\ge |\gamma |^{2}/2
\end{equation}
and 
\begin{equation}
|\alpha \wedge \beta |\ge 1
\end{equation}
(by the rank 2 case), so 
\begin{equation}
|\alpha \wedge \beta \wedge \gamma |^{2}=|\alpha \wedge \beta |^{2}z^{2}\ge
|\gamma |^{2}/2.
\end{equation}
If $l(\alpha )$, $l(\beta )$ or $l(\gamma )\ge 2,$ then by the proposition
and the fact that $|\gamma |\ge |\beta |\ge |\alpha |,$ we must have $%
|\gamma |^{2}\ge 2,$ and we are done.

There remains the case $\alpha =v_{1}\otimes w_{1},$ $\beta =v_{2}\otimes
w_{2}$ and $\gamma =v_{3}\otimes w_{3},$ in which the proposition does not
help us. The key in this case is the following identity.

\begin{lemma}
We have an 18-term relation 
\begin{eqnarray}
|\alpha \wedge \beta \wedge \gamma |^{2}
&=&|v_{1}|^{2}|v_{2}|^{2}|v_{3}|^{2}|w_{1}\wedge w_{2}\wedge
w_{3}|^{2}+|v_{1}\wedge v_{2}\wedge
v_{3}|^{2}|w_{1}|^{2}|w_{2}|^{2}|w_{3}|^{2}  \notag \\
&&-\frac{1}{2}\sum_{i=1}^{3}|v_{i}|^{2}|v_{i^{\prime }}\wedge v_{i^{\prime
\prime }}|^{2}|w_{i}|^{2}|w_{i^{\prime }}\wedge w_{i^{\prime \prime }}|^{2} 
\notag \\
&&+\frac{1}{2}\sum_{i\neq j}|v_{i}|^{2}|v_{i^{\prime }}\wedge v_{i^{\prime
\prime }}|^{2}|w_{j}|^{2}|w_{j^{\prime }}\wedge w_{j^{\prime \prime }}|^{2} 
\notag \\
&&-\frac{1}{2}\sum_{i=1}^{3}|v_{i}|^{2}|v_{i^{\prime }}\wedge v_{i^{\prime
\prime }}|^{2}|w_{1}\wedge w_{2}\wedge w_{3}|^{2}  \notag \\
&&-\frac{1}{2}\sum_{i=1}^{3}|v_{1}\wedge v_{2}\wedge
v_{3}|^{2}|w_{i}|^{2}|w_{i^{\prime }}\wedge w_{i^{\prime \prime }}|^{2} 
\notag \\
&&+\frac{1}{2}|v_{1}\wedge v_{2}\wedge v_{3}|^{2}|w_{1}\wedge w_{2}\wedge
w_{3}|^{2},
\end{eqnarray}
where we have used the convention that $\{i^{\prime },i^{\prime \prime }\}$
are the two indices complementary to $i$.
\end{lemma}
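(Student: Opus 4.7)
The starting point is the identity
$$|\alpha\wedge\beta\wedge\gamma|^{2}=\det\bigl((\alpha_i,\alpha_j)\bigr)_{1\le i,j\le 3},$$
with $\alpha_1,\alpha_2,\alpha_3=\alpha,\beta,\gamma$; since $(\alpha_i,\alpha_j)=(v_i,v_j)(w_i,w_j)$, the Gram matrix on the right is the entrywise (Hadamard) product $V\circ W$ of the two $3\times 3$ Gram matrices $V=\bigl((v_i,v_j)\bigr)$ and $W=\bigl((w_i,w_j)\bigr)$. Every other quantity appearing in the lemma is a principal minor of $V$ or $W$: $|v_i|^2=V_{ii}$, $|v_i\wedge v_j|^2=V_{ii}V_{jj}-V_{ij}^2$, $|v_1\wedge v_2\wedge v_3|^2=\det V$, and symmetrically for $W$. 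The lemma is therefore equivalent to a polynomial identity expressing $\det(V\circ W)$ in terms of the principal minors of $V$ and $W$, and I would verify it by direct expansion.

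Expanding $\det(V\circ W)$ by the Leibniz formula and collapsing using the symmetry of $V,W$ produces five monomial types: one diagonal product $V_{11}V_{22}V_{33}W_{11}W_{22}W_{33}$, three \emph{edge} terms of the shape $-V_{ii}V_{jk}^{2}W_{ii}W_{jk}^{2}$, and one \emph{cross-term} $2\,V_{12}V_{13}V_{23}W_{12}W_{13}W_{23}$. The edge terms expand cleanly via $V_{jk}^{2}=V_{jj}V_{kk}-|v_j\wedge v_k|^{2}$ (and likewise for $W$). For the cross-term, the decisive manipulation is to rearrange the Leibniz expansion of $\det V$ into
$$2\,V_{12}V_{13}V_{23}=\det V+2\,V_{11}V_{22}V_{33}-\sum_{i=1}^{3}V_{ii}\,|v_{i'}\wedge v_{i''}|^{2},$$
and analogously for $W$. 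The cross-term is then $\tfrac12$ times the product of these two five-term expressions, and the $25$ sub-monomials so produced are the source of all the $\tfrac12$ coefficients appearing in the last four lines of the RHS.

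What remains is a bookkeeping check: matching each of the $18$ terms on the right against the sum of its contributions from the diagonal, edge, and cross expansions on the left. For example, the monomial $V_{11}V_{22}V_{33}W_{11}W_{22}W_{33}$ acquires total coefficient $1-3+2=0$ (diagonal, three edges, cross), while the identity
$$\textstyle\sum_{i\ne j}V_{ii}|v_{i'}\wedge v_{i''}|^{2}W_{jj}|w_{j'}\wedge w_{j''}|^{2}=\bigl(\sum_{i}V_{ii}|v_{i'}\wedge v_{i''}|^{2}\bigr)\bigl(\sum_{j}W_{jj}|w_{j'}\wedge w_{j''}|^{2}\bigr)-\sum_{i}V_{ii}|v_{i'}\wedge v_{i''}|^{2}W_{ii}|w_{i'}\wedge w_{i''}|^{2}$$
ties Terms 3 and 4 together and accounts for the $\tfrac12$ factor in Term 4. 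The only obstacle is the careful tracking of signs and of the fractions $\tfrac12$; there is no conceptual difficulty beyond this accounting.
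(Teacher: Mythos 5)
Your proposal is correct and follows essentially the same route as the paper: both expand the Gram determinant $\det\bigl((v_i,v_j)(w_i,w_j)\bigr)$, convert squared inner products via $(x,y)^2=|x|^2|y|^2-|x\wedge y|^2$, and rewrite the cross-term $2V_{12}V_{13}V_{23}$ as the five-term expression $\det V+2V_{11}V_{22}V_{33}-\sum_i V_{ii}|v_{i'}\wedge v_{i''}|^2$ before multiplying the two copies and collecting terms. The only difference is that the paper delegates the final 38-term cancellation to a computer, whereas you carry out the bookkeeping by hand (and your sample coefficient checks, e.g. $1-3+2=0$ for the diagonal monomial, are correct).
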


\begin{proof}
\emph{\ }Expanding the grammian of three vectors in Euclidean space we get 
\begin{eqnarray}
|\alpha _{1}\wedge \alpha _{2}\wedge \alpha _{3}|^{2}
&=&\prod_{i=1}^{3}|\alpha _{i}|^{2}-\sum_{i=1}^{3}|\alpha _{i}|^{2}(\alpha
_{i^{\prime }},\alpha _{i^{\prime \prime }})^{2}  \notag \\
&&+2(\alpha _{1},\alpha _{2})(\alpha _{2},\alpha _{3})(\alpha _{3},\alpha
_{1}).
\end{eqnarray}
For our decomposable tensors we get the expression 
\begin{eqnarray}
|\alpha \wedge \beta \wedge \gamma |^{2}
&=&\prod_{i=1}^{3}|v_{i}|^{2}|w_{i}|^{2}-\sum_{i=1}^{3}|v_{i}|^{2}(v_{i^{%
\prime }},v_{i^{\prime \prime }})^{2}|w_{i}|^{2}(w_{i^{\prime
}},w_{i^{\prime \prime }})^{2}  \notag \\
&&+2(v_{1},v_{2})(v_{2},v_{3})(v_{3},v_{1})(w_{1},w_{2})(w_{2},w_{3})(w_{3},w_{1}).
\end{eqnarray}
We proceed to replace all the inner products by expressions involving only
wedge-products (sines instead of cosines). The elements of the form $%
(x,y)^{2}$ can be replaced by $|x|^{2}|y|^{2}-|x\wedge y|^{2}$ as in the
case of rank 2. For $(x,y)(y,z)(z,x)$ use 
\begin{eqnarray}
2(x,y)(y,z)(z,x) &=&|x\wedge y\wedge
z|^{2}-|x|^{2}|y|^{2}|z|^{2}+|x|^{2}(y,z)^{2}+\cdot \cdot \cdot  \notag \\
&=&|x\wedge y\wedge
z|^{2}-|x|^{2}|y|^{2}|z|^{2}+|x|^{2}(|y|^{2}|z|^{2}-|y\wedge z|^{2})+\cdot
\cdot \cdot
\end{eqnarray}
This is an 5-term expression and when we multiply the one for the $%
v_{i}^{\prime }s$ with the one for the $w_{i}^{\prime }s$ we get 25 terms,
and a total of 38 terms for $|\alpha \wedge \beta \wedge \gamma |^{2}.$ A
direct computation (carried out by a computer) reveals that there are many
cancellations, and the 18-term relation falls out.
\end{proof}

We continue the proof of the theorem, when $\alpha $, $\beta $ and $\gamma $
are decomposable as above. Choose an orthonormal basis $%
e_{1},e_{2},e_{3},... $ of $L_{\Bbb{R}}$ so that $v_{1}\in \langle
e_{1}\rangle ,$ $v_{2}\in \langle e_{1},e_{2}\rangle ,$ $v_{3}\in \langle
e_{1},e_{2},e_{3}\rangle $ and similarly an orthonormal basis $e_{1}^{\prime
},e_{2}^{\prime },e_{3}^{\prime },...$ of $M_{\Bbb{R}}$ putting the $w_{i}$
in a triangular form. Assume first that the $v_{i}$ are linearly
independent. Expanding in the orthonormal basis $e_{i}\otimes e_{j}^{\prime }
$ of $L_{\Bbb{R}}\otimes M_{\Bbb{R}}$ and in the corresponding orthonormal
basis of of $\bigwedge^{3}(L_{\Bbb{R}}\otimes M_{\Bbb{R}})$ (recall the
normalization from 1.1) 
\begin{eqnarray}
|\alpha \wedge \beta \wedge \gamma |^{2} &\ge
&\sum_{i=1}^{2}\sum_{j=1}^{3}coef^{2}((e_{1}\otimes e_{1}^{\prime })\wedge
(e_{2}\otimes e_{i}^{\prime })\wedge (e_{3}\otimes e_{j}^{\prime }))  \notag
\\
&=&|v_{1}\wedge v_{2}\wedge v_{3}|^{2}|w_{1}|^{2}|w_{2}|^{2}|w_{3}|^{2}\ge 1.
\end{eqnarray}

Assume next that both the $v_{i}$ and the $w_{i}$ are linearly dependent. If 
$v_{1}$ and $v_{2}$ are proportional then $w_{1}$ and $w_{2}$ can not be
proportional, otherwise $rk(S)\le 2.$ In this case we may assume that $%
v_{1},v_{2}\in \langle e_{1}\rangle $ and $v_{3}\in \langle
e_{1},e_{2}\rangle ,$ while $w_{1}\in \langle e_{1}^{\prime }\rangle $ and $%
w_{2},w_{3}\in \langle e_{1}^{\prime },e_{2}^{\prime }\rangle .$ We find 
\begin{eqnarray}
|\alpha \wedge \beta \wedge \gamma |^{2}
&=&\sum_{i=1}^{2}coef^{2}((e_{1}\otimes e_{1}^{\prime })\wedge (e_{1}\otimes
e_{2}^{\prime })\wedge (e_{2}\otimes e_{i}^{\prime }))  \notag \\
&=&|v_{1}||v_{2}||v_{1}\wedge v_{3}||v_{2}\wedge v_{3}||w_{1}\wedge
w_{2}|^{2}|w_{3}|^{2}\ge 1.
\end{eqnarray}

There remains the case where the $v_{i}$ and the $w_{i}$ are linearly
dependent, but no two vectors in each triplet are proportional.

Introduce the notation 
\begin{equation}
\lambda _{i}=|v_{i}||v_{i^{\prime }}\wedge v_{i^{\prime \prime }}|
\end{equation}
and similarly $\mu _{i}=|w_{i}||w_{i^{\prime }}\wedge w_{i^{\prime \prime
}}|.$ Note that 
\begin{equation*}
\lambda _{i}=|v_{1}||v_{2}||v_{3}||\sin \theta _{i}|
\end{equation*}
where $\theta _{i}$ is the angle between $v_{i^{\prime }}$ and $v_{i^{\prime
\prime }}$ in the plane spanned by the $v_{i}^{\prime }s.$ We may assume
that $\theta _{i}=\theta _{i^{\prime }}+\theta _{i^{\prime \prime }}.$ A
direct computation shows then that 
\begin{equation}
\lambda _{i}^{2}-\lambda _{i^{\prime }}^{2}-\lambda _{i^{\prime \prime
}}^{2}=\pm 2\lambda _{i^{\prime }}\lambda _{i^{\prime \prime }}\cos \theta
_{i}.
\end{equation}
Similarly denote by $\omega _{i}$ the angle between $w_{i^{\prime }}$ and $%
w_{i^{\prime \prime }}.$ We now observe that in the 18-term relation, nine
terms drop out and the remaining nine give 
\begin{eqnarray}
|\alpha \wedge \beta \wedge \gamma |^{2} &=&\frac{1}{2}\left\{ 
\begin{array}{lll}
-\lambda _{1}^{2}\mu _{1}^{2} & +\lambda _{1}^{2}\mu _{2}^{2} & +\lambda
_{1}^{2}\mu _{3}^{2} \\ 
+\lambda _{2}^{2}\mu _{1}^{2} & -\lambda _{2}^{2}\mu _{2}^{2} & +\lambda
_{2}^{2}\mu _{3}^{2} \\ 
+\lambda _{3}^{2}\mu _{1}^{2} & +\lambda _{3}^{2}\mu _{2}^{2} & -\lambda
_{3}^{2}\mu _{3}^{2}
\end{array}
\right\}  \notag \\
&=&\frac{1}{2}\left\{ 
\begin{array}{c}
\lambda _{1}^{2}(-\mu _{1}^{2}+\mu _{2}^{2}+\mu _{3}^{2})+\lambda
_{2}^{2}(\mu _{1}^{2}-\mu _{2}^{2}+\mu _{3}^{2}) \\ 
+(\lambda _{1}^{2}+\lambda _{2}^{2}\pm 2\lambda _{1}\lambda _{2}\cos \theta
_{3})(\mu _{1}^{2}+\mu _{2}^{2}-\mu _{3}^{2})
\end{array}
\right\}  \notag \\
&=&\lambda _{1}^{2}\mu _{2}^{2}+\lambda _{2}^{2}\mu _{1}^{2}\pm \lambda
_{1}\lambda _{2}\cos \theta _{3}(\mu _{1}^{2}+\mu _{2}^{2}-\mu _{3}^{2}) 
\notag \\
&=&\lambda _{1}^{2}\mu _{2}^{2}+\lambda _{2}^{2}\mu _{1}^{2}\pm 2\lambda
_{1}\lambda _{2}\mu _{1}\mu _{2}\cos \theta _{3}\cos \omega _{3}  \notag \\
&\ge &\lambda _{1}^{2}\mu _{2}^{2}+\lambda _{2}^{2}\mu _{1}^{2}-\lambda
_{1}^{2}\mu _{2}^{2}\cos ^{2}\theta _{3}-\lambda _{2}^{2}\mu _{1}^{2}\cos
^{2}\omega _{3}  \notag \\
&=&\lambda _{1}^{2}\mu _{2}^{2}\sin ^{2}\theta _{3}+\lambda _{2}^{2}\mu
_{1}^{2}\sin ^{2}\omega _{3}  \notag \\
&\ge &\frac{\lambda _{1}^{2}\mu _{2}^{2}}{|v_{1}|^{2}|v_{2}|^{2}}+\frac{%
\lambda _{2}^{2}\mu _{1}^{2}}{|w_{1}|^{2}|w_{2}|^{2}}  \notag \\
&=&\frac{|v_{2}\wedge v_{3}|^{2}|w_{2}|^{2}|w_{1}\wedge w_{3}|^{2}}{%
|v_{2}|^{2}}+\frac{|w_{2}\wedge w_{3}|^{2}|v_{2}|^{2}|v_{1}\wedge v_{3}|^{2}%
}{|w_{2}|^{2}}  \notag \\
&\ge &\frac{|w_{2}|^{2}}{|v_{2}|^{2}}+\frac{|v_{2}|^{2}}{|w_{2}|^{2}}\ge 2.
\end{eqnarray}
\medskip This concludes the proof of the theorem.

\subsection{An argument from duality}

Duality allows us to conclude more. Quite generally, if 
\begin{equation}
0\rightarrow P\rightarrow Q\rightarrow R\rightarrow 0
\end{equation}
is an exact sequence of Euclidean lattices (this means that $Q/P$ is
torsion-free and that the metrics on $P_{\Bbb{R}}$ and $R_{\Bbb{R}}$ are the
ones induced from the metric on $Q_{\Bbb{R}}$) then 
\begin{equation}
\mu (Q)=\frac{rkP}{rkQ}\mu (P)+\frac{rkR}{rkQ}\mu (R).
\end{equation}
If $R^{\vee }$ is the dual lattice, $\mu (R^{\vee })=-\mu (R).$ From these
two observations it follows at once that if $L$ is semistable, so is $%
L^{\vee }.$

Let $L$ and $M$ be two unimodular semistable lattices of ranks $n$ and $m$.
Then $L^{\vee }$ and $M^{\vee }$ are unimodular and semistable too. Suppose
we know the desired results for all $S$ of some rank $s.$ Let $P$ be a rank $%
nm-s$ (we shall say it has \emph{corank }$s$) sublattice of $L\otimes M$ for
which we want to prove that $\mu (P)\le 0.$ We may assume that $\,(L\otimes
M)/P$ is torsion free. We equip it with the quotient metric and call it $R$.
Then $R$ is a rank $s$ lattice and $R^{\vee }=P^{\perp }$ is a rank $s$
sublattice of $L^{\vee }\otimes M^{\vee }.$ By what we have seen already, $%
\mu (R^{\vee })\le 0,$ but 
\begin{equation}
\mu (P)=-\frac{s}{nm-s}\mu (R)
\end{equation}
since $\mu (L\otimes M)=0,$ and $\mu (R)=-\mu (R^{\vee }).$ This shows that
if we know the desired result for all rank $s$ lattices, we also know it for
all corank $s$ lattices.\medskip

\begin{corollary}
If $L$ and $M$ are semistable and they are both of rank 2, or one of them is
of rank 2 and the other is of rank 3, then $L\otimes M$ is
semistable.\medskip
\end{corollary}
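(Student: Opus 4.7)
The plan is essentially a bookkeeping exercise combining the theorem with the duality argument of \S1.5, once we normalize so that $L$ and $M$ are unimodular (so that $\mu(L\otimes M)=0$, and semistability of $L\otimes M$ is the statement that $\mu(S)\le 0$ for every sublattice $S$).

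First I would dispose of the rank $2\times 2$ case. Here $L\otimes M$ has rank $4$, so any proper sublattice $S\subsetneq L\otimes M$ has rank $1$, $2$, or $3$. Each of these ranks is covered directly by the theorem, so $\mu(S)\le \mu(L\otimes M)=0$ and we are done. No duality is needed in this case.

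Next, for the $2\times 3$ case (and by symmetry, $3\times 2$), $L\otimes M$ has rank $6$, and the proper sublattices have ranks $1,2,3,4,5$. Ranks $1,2,3$ are again handled directly by the theorem. For ranks $4$ and $5$, I would invoke the duality argument from \S1.5. Since $L$ and $M$ are semistable and unimodular, so are $L^{\vee}$ and $M^{\vee}$, with the same ranks $2$ and $3$. A rank $4$ sublattice of $L\otimes M$ is of corank $2$, and a rank $5$ sublattice is of corank $1$. By the argument of \S1.5, the validity of the slope inequality for corank $s$ sublattices of $L\otimes M$ follows from the validity for rank $s$ sublattices of $L^{\vee}\otimes M^{\vee}$. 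Since ranks $1$ and $2$ in $L^{\vee}\otimes M^{\vee}$ are handled by the theorem (applied to $L^{\vee}$ and $M^{\vee}$), ranks $5$ and $4$ in $L\otimes M$ are also under control.

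There is really no obstacle beyond matching the ranks with the available input. The only minor point is being careful that the duality argument requires passing $(L\otimes M)/P$ to its torsion-free quotient before forming $R$ and $R^{\vee}$, but that is already built into the reduction in \S1.5, and we may assume without loss of generality that $(L\otimes M)/S$ is torsion-free when proving $\mu(S)\le 0$. Combining both cases, every proper sublattice of $L\otimes M$ satisfies $\mu(S)\le 0$, which is exactly semistability.
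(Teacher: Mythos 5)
Your argument is correct and is exactly the intended proof: the theorem covers sublattices of rank at most $3$, and the duality reduction of \S1.5 converts corank-$s$ statements for $L\otimes M$ into rank-$s$ statements for $L^{\vee}\otimes M^{\vee}$, which covers the remaining ranks $4$ and $5$ in the $2\times 3$ case (and nothing extra is needed in the $2\times 2$ case). This matches the paper's approach.
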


\begin{equation*}
\text{\textbf{References}}
\end{equation*}

[Fa] Faltings, G.: \emph{Mumford-Stabilit\"{a}t in der algebraischen
Geometrie}, Proceedings of the 1994 ICM.\medskip

[To] Totaro, B.: \emph{Tensor products in }$p$\emph{-adic Hodge theory, }%
Duke Math.J. \textbf{83} (1996), 79-104.

\end{document}